\documentclass[reqno]{amsart}
\usepackage{amssymb}
\usepackage[usenames,dvipsnames]{color}
\usepackage{hyperref}
\hypersetup{hidelinks}
\usepackage{tikz}
\usetikzlibrary{arrows.meta,calc,patterns}

\usepackage{todonotes}

\numberwithin{equation}{section}

\newtheorem{theorem}{Theorem}[section]
\newtheorem{corollary}[theorem]{Corollary}
\newtheorem{lemma}[theorem]{Lemma}

\theoremstyle{definition}
\newtheorem{remark}[theorem]{Remark}

\def\bR{\mathbb{R}}
\def\cC{\mathcal{C}}
\def\cH{\mathcal{H}}
\def\cQ{\mathcal{Q}}
\def\cE{\mathcal{E}}
\def\cG{\mathcal{G}}

\def\epsi{\varepsilon}

\begin{document}

\title[One-dimensional parabolic equations]{On one-space dimensional parabolic equations with measurable coefficients: Sobolev estimates and the Alexandrov maximum principle}

\author[H. Dong]{Hongjie Dong}
\address[H. Dong]{Division of Applied Mathematics, Brown University, 182 George Street, Providence, RI 02912, USA}
\email{Hongjie\_Dong@brown.edu}

\thanks{H. Dong was partially supported by the NSF under agreements DMS-2350129.}

\author[Z. Li]{Zongyuan Li}
\address[Z. Li]{Department of Mathematics, City University of Hong Kong, 83 Tat Chee Avenue, Kowloon Tong, Hong Kong SAR, China}
\email{zongyuan.li@cityu.edu.hk}

\thanks{Z. Li was partially supported by the Hong Kong RGC grant ECS 21307225.}

\subjclass[2020]{35K10, 35K15, 35B45}
\keywords{parabolic equations, measurable coefficients, Sobolev estimates, divergence form, nondivergence form, the Alexandrov maximum principle}

\begin{abstract}
Let $0<\kappa<1$ and set $p_+=2/(1-\kappa)$ and $p_-=2/(1+\kappa)$.  We construct a coefficient $\kappa\leq a\leq\kappa^{-1}$, smooth outside a compact set of Lebesgue measure zero, for which the $W^{1,2}_{p_+}$ estimate fails for the one-space dimensional nondivergence form parabolic equation. The corresponding solution has second spatial derivative in the weak $L_{p_+}$ space, but not in $L_{p_+}$. By duality, the $W^{1,2}_{p_-}$ a priori estimate also fails.  Conversely, we demonstrate that
the $W^{1,2}_p$ estimate holds and the equation is uniquely solvable for $|p-2|<c\kappa$, showing that the size of the solvability interval around $2$ has optimal order $\kappa$ as $\kappa\downarrow0$, which addresses a question raised in \cite{Krylov2016}. 
These results imply that the Alexandrov maximum principle holds for $p>2-c\kappa$ and this order is sharp as $\kappa\to 0$. The corresponding results for divergence form equations are also obtained.
\end{abstract}

\maketitle

\section{Introduction}

We consider one-dimensional parabolic equations with a bounded measurable coefficient $a=a(t,x)$ satisfying
\begin{equation}
 \kappa\leq a(t,x)\leq\kappa^{-1},\qquad 0<\kappa<1.                 \label{eq:intro-ell}
\end{equation}
The two equations considered in this paper are the nondivergence equation
\begin{equation}
 u_t-a u_{xx}=f                                           \label{eq:intro-nondiv}
\end{equation}
and the divergence equation
\begin{equation}
 v_t-(a v_x)_x=f_x.                                      \label{eq:intro-div}
\end{equation}
We are interested in the Sobolev type estimates of the form
\begin{equation}
 \|u_t\|_{L_p}+\|u_{xx}\|_{L_p}\leq N\|f\|_{L_p}       \label{eq:intro-est-nondiv}
\end{equation}
and
\begin{equation}
 \|v_x\|_{L_p}\leq N\|f\|_{L_p},                       \label{eq:intro-est-div}
\end{equation}
respectively. Here and below lower-order terms and harmless $L_p$ norms of the solution are omitted when they are not relevant to the discussion.

The exponent $p=2$ is special for both equations. For \eqref{eq:intro-nondiv}, integration by parts gives the $W^{1,2}_2$ estimate without any regularity assumption on $a$, while for \eqref{eq:intro-div} one has the usual energy estimate.  If the coefficients have additional regularity, for instance VMO or partially VMO regularity, estimates for all $p\in(1,\infty)$ are well known; see, among many references, \cite{Krylov2007,Krylov2008,Dong2020}. In particular, the coefficient $a$ is allowed to be merely measurable with respect to $t$ or $x$ and has small mean oscillation with respect to the other variable in small cylinders. For coefficients measurable in both variables, the situation is different. Krylov \cite{Krylov2016} proved the $W^{1,2}_p$ estimate and solvability for \eqref{eq:intro-nondiv} when $p$ is in a small neighborhood of $2$ which depends on the ellipticity constant, and gave counterexamples showing nonuniqueness for $1<p<3/2$ and nonsolvability for $p>3$. The construction in \cite{Krylov2016} is based on estimates from below and above for solutions of the Cauchy problem with initial data given by an indicator function of a small interval, and then a delicate construction of explicit solutions to the Ornstein--Uhlenbeck equations. The validity of the \(W^{1,2}_p\) estimates in the entire range \(p\in[3/2,3]\) was left open.
For divergence equations, parabolic Meyers-type estimates give higher integrability of the spatial gradient slightly above $2$, and by duality also slightly below $2$. See \cite{GiaquintaStruwe1982,ABESParabolic2019}. We also mention the recent paper \cite{BMV24}, in which the authors constructed counterexamples to maximal regularity for divergence form parabolic equations in space dimensions two and higher.

In this paper, we first give a quantitative positive result. There is an absolute constant $c>0$ such that, for every coefficient satisfying \eqref{eq:intro-ell}, the zero initial value problem for \eqref{eq:intro-nondiv} and \eqref{eq:intro-div} on $\cQ=(0,1)\times\bR$ is uniquely solvable in $W^{1,2}_p$ and $\cH^1_p(\cQ)$, respectively, whenever
\begin{equation}
 |p-2|<\delta_\kappa,\qquad \delta_\kappa=c\kappa.             \label{eq:intro-positive}
\end{equation}
This follows from the quantitative Shneiberg theorem of Auscher, Bortz, Egert, and Saari \cite[Theorem A.1]{ABESShneiberg}, applied with coefficient-adapted norms.

Our main negative result gives an explicit obstruction at the exponents
\begin{equation*}
 p_+=\frac{2}{1-\kappa},\qquad p_-=p_+'=\frac{2}{1+\kappa}.    
\end{equation*}
For $p_+$ we construct a coefficient satisfying \eqref{eq:intro-ell} and a $W^{1,2}_2$ solution of the homogeneous equation \eqref{eq:intro-nondiv} in $Q_1$ such that $u,u_x$ are bounded and continuous in $Q_1$, but
\begin{equation*}
 u_t,u_{xx}\in L_{p_+,\infty}(D)\setminus L_{p_+}(D)
\end{equation*}
for a nonempty open rectangle $D\subset\subset Q_{1/2}$
For the same fixed coefficient we also construct solutions $\phi_j\in C_0^\infty(Q_1)$ for which the right-hand side in \eqref{eq:intro-nondiv} as well as $\phi_j$ and $\phi_{j,x}$ stays uniformly bounded in $L_{p_+}(Q_1)$ while
\[
 \|\phi_{j,xx}\|_{L_{p_+}(D)}\to\infty.
\]
Thus the estimate \eqref{eq:intro-est-nondiv} fails at $p_+$.  Moreover, the coefficient can be chosen smooth outside a compact set of Lebesgue measure zero.  By a one-dimensional duality argument, 
the corresponding estimate fails at $p_-$.  Hence, if the ellipticity parameter is allowed to depend on $p$, the a priori estimate fails for every $p\neq2$.  It is worth noting that for $p<3/2$, Krylov's result in \cite{Krylov2016} is stronger as it gives nonuniqueness for the homogeneous problem of the Cauchy problem.

The positive and negative results have the same order in the ellipticity parameter.  Indeed,
\[
 p_+-2=\frac{2\kappa}{1-\kappa},\qquad 2-p_-=\frac{2\kappa}{1+\kappa}.
\]
Together with \eqref{eq:intro-positive}, this shows that the largest coefficient-uniform neighborhood of $2$ has size of order $\kappa$ as $\kappa\downarrow0$.

The divergence equation follows from the same construction.  If $v=u_x$, then
\[
 v_t-(a v_x)_x=0
\]
in the weak sense and $v_x=u_{xx}$.  Thus the estimate \eqref{eq:intro-est-div} fails at $p_+$, and duality gives the corresponding failure at $p_-$. 

For nondivergence form elliptic equations, counterexamples of the $W^2_p,p\neq 2,$ estimate with piecewise constant coefficients in each quadrant in $\bR^2$ were obtained by Dong and Kim \cite{DongKim2014}. For divergence form equations, see an earlier construction in Piccinini and Spagnolo \cite{PS72}. Another example due to Ural'tseva \cite{MR0226179} shows the impossibility of the $W^2_p$ estimate when the dimension $d\ge 2$ and $p\neq 2$ even if the coefficients are continuous except at a single point ($d=2$) or a line ($d=3$). See also \cite{MR2667637}. For divergence form equation in $\bR^2$, see also the seminal work by Serrin \cite{Serrin64}. In \cite{MR1612401}, Nadirashvili showed that the weak uniqueness for martingale problems may fail if coefficients are merely measurable and $d \ge 3$.

Our construction is in the spirit of Astala, Faraco, and Sz\'ekelyhidi \cite{AFS2008}, where the convex integration method, originally due to M\"uller and \v Sver\'ak \cite{MS03}, was used to find counterexamples for both divergence and nondivergence form elliptic equations in two dimensions, with the same critical exponents $p_+$ and $p_-$ as in this paper. See also the references therein for earlier results.
In the present parabolic setting, the two convex splittings are realized separately by spatial and temporal perturbations. 
A notable feature of the construction is that the spatial and temporal scales of the sets $E_n$ decrease at comparable rates, rather than according to the natural parabolic relation in which the temporal scale is the square of the spatial one. In terms of the splitting parameters, the parabolic relation would correspond to $\eta_n\simeq 2\theta_n$, whereas in our construction $\eta_n\simeq\theta_n$. This gives the iteration an elliptic feature. Interestingly, imposing the parabolic relation instead leads, at the level of a formal scaling calculation, to the critical exponent $3/(1-\kappa)$, which approaches $3$ as $\kappa\to0$, the threshold appearing in Krylov's example. Recently, the convex integration method has also been used to study Sobolev estimates for the $p$-Laplace equation. See \cite{CT25,Schikorra26}.

As an application of our main results, we obtain a quantitative $p$ range for the Alexandrov maximum principle for one-space dimensional parabolic equations with measurable coefficients. 
In history, for uniformly parabolic equations in $d$ spatial dimensions, the Alexandrov maximum principle was established by Krylov for $p = d+1$ in \cite{KrylovParabolic1976}. Later, this was improved to $p = d+1-\epsi$ by Escauriaza \cite{Escauriaza1993}, following the work of Fabes and Stroock \cite{FabesStroock1984} on elliptic equations. For other related works, see Crandall et al. \cite{CrandallFokKocanSwiech1998} and Cabr\'e \cite{CabreABP1995}.

A natural question is how far the exponent can be lowered.
In view of the parabolic Sobolev embedding $W^{1,2}_p \hookrightarrow C^0$, 
we need $p > (d+2)/2$ even for the heat equation.
The optimal lower endpoint of $p$, however, appears to be unknown even in one space dimension. 
In \cite{Krylov2016}, Krylov constructed examples showing failure for $p$ below the critical Sobolev exponent $3/2$, if the ellipticity constant is allowed to depend on $p$, leaving open the optimal lower endpoint of $p$. See the discussions in \cite[Remark~2.2]{Krylov2016}. Our results show that for a fixed ellipticity constant $\kappa$, the Alexandrov maximum principle holds for $p > 2-c\kappa$, and this order is sharp as $\kappa \rightarrow 0$. In particular, the critical exponent is ellipticity-dependent and approaches $2$ as the ellipticity degenerates.

The remainder of the paper is organized as follows.  In Section \ref{sec:main} we state the main results.  The positive result near $2$ is proved in Section \ref{sec:positive}.  Section \ref{sec:construction} contains the key construction of the counterexample. The proofs of the negative results on Sobolev type estimates are given in Sections \ref{sec:estfailure} and \ref{sec:duality}. In Section \ref{sec-alex}, we prove both positive and negative results for the Alexandrov maximum principle.

\section{Main results}                                      \label{sec:main}

For the positive and solvability results, we consider the domain
\[
 \cQ=(0,1)\times\bR.
\]
Set
\[
 \mathring W^{1,2}_p(\cQ)
 =\{u\in W^{1,2}_p(\cQ):u(0,\cdot)=0\},
\]
where
\[
 W^{1,2}_p(\cQ)=\{u:u,u_x,u_t,u_{xx}\in L_p(\cQ)\}.
\]
For divergence form equations, we consider
\begin{equation*}
    \cH^1_p(\cQ) = \{u: u,  u_x, (1-\partial_{xx})^{-1/2}u_t \in L_p(\cQ)\}.
\end{equation*}
The weak $L_p$ space is denoted by $L_{p,\infty}$. We also work with the rectangle
\[
 Q=Q_1=(-1,0)\times(-1,1),
\]
with its parabolic boundary denoted by $\partial_p Q_1 := (\{-1\}\times (-1,1)) \cup ((-1,0) \times \{-1,1\})$.

\begin{theorem}            \label{thm:positive}
There is an absolute constant $c>0$ with the following property.  Let $0<\kappa<1$ and let $a$ be any measurable function on $\cQ$ satisfying \eqref{eq:intro-ell}.  If
\begin{equation}
 |p-2|<c\kappa,                                      \label{eq:positive-range}
\end{equation}
then for every $f\in L_p(\cQ)$ there is a unique $u\in\mathring W^{1,2}_p(\cQ)$ satisfying
\begin{equation*}
 u_t-a u_{xx}=f\quad\text{in }\cQ,\qquad u(0,\cdot)=0, 
\end{equation*}
and
\begin{equation}
 \|u_t\|_{L_p(\cQ)}+\|u_{xx}\|_{L_p(\cQ)}
 \leq N_{\kappa,p}\|f\|_{L_p(\cQ)}.                       \label{eq:positive-est}
\end{equation}

Similarly, there exists a unique weak solution $v \in \cH^1_p(\cQ)$ of
\begin{equation}\label{eqn-260916-1128}
    v_t - (av_x)_x = f_x
    \quad
    \text{in}\,\,\cQ,
    \qquad
    v(0,\cdot) = 0,
\end{equation}
and
\begin{equation*}
    \|v\|_{\cH^1_p(\cQ)} \leq N_{\kappa, p} \|f\|_{L_p(\cQ)}.
\end{equation*}

The same results also hold for initial-boundary value problems on $Q_1$ with zero data on $\partial_p Q_1$.
\end{theorem}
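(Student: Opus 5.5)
Our plan is to use the quantitative Shneiberg theorem \cite[Theorem A.1]{ABESShneiberg}: if $T:X_\theta\to Y_\theta$ is bounded on a complex interpolation scale with uniform bounds $\|T\|\le M$ and is invertible at $\theta_0$ with $\|T^{-1}\|\le 1/m$, then it stays invertible for $|\theta-\theta_0|\lesssim m/M$ (up to a logarithmic factor in the scale endpoints, which are fixed). The point is to choose norms so that $m/M\gtrsim\kappa$. For the nondivergence problem we first rewrite it as $a^{-1}u_t-u_{xx}=a^{-1}f$ and then take $T u:=a^{-1}u_t-u_{xx}$ (or, more symmetrically, use the weighted norm on $f$) acting from $X_p=\mathring W^{1,2}_p(\cQ)$, normed by $\|u_t\|_{L_p}+\|u_{xx}\|_{L_p}$, into $Y_p=L_p(\cQ)$. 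The scale $p\in(3/2,3)$, say, is a complex interpolation scale for both families. This is standard for $L_p$. For $\mathring W^{1,2}_p$ it follows because the heat operator $\partial_t-\partial_{xx}$ with zero initial data is an isomorphism $\mathring W^{1,2}_p\to L_p$ for all $1<p<\infty$, so $\mathring W^{1,2}_p$ is a retract of $L_p$.

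\textbf{Step 1: uniform upper bound.} We have $\|Tu\|_{L_p}\le\kappa^{-1}\|u_t\|+\|u_{xx}\|\le\kappa^{-1}\|u\|_{X_p}$. This bound is too crude, so we adapt the norms. Writing $a^{-1}=b$ with $\kappa\le b\le\kappa^{-1}$, we normalize instead by $\lambda b$ with $\lambda=\kappa$, i.e.\ consider $\tilde T u=\kappa(b u_t- u_{xx})$ after rescaling time. This gives the equation $\beta u_t - u_{xx}$ with $\beta\in[\kappa^2,1]$. The clean choice is to compare with a constant-coefficient heat operator with time rescaled: $Tu=\beta u_t-u_{xx}$, $\beta\in[\kappa^2,1]$, with $X$-norm $\|u_{xx}\|_{L_p}+\|\beta u_t\|$. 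The upper bound is then $M\le 1$ against the norm $\|\beta u_t\|_{L_p}+\|u_{xx}\|_{L_p}$. This norm depends on $a$ but is equivalent (with $\kappa$-dependent constants, which is harmless) to the standard one. Interpolation of the weighted $L_p$ pieces is uniform, since multiplication by $\beta$ is a fixed isometric retraction structure: the map $u\mapsto(\beta u_t,u_{xx})$ realizes $X_p$ as a retract of $L_p\times L_p$ with retraction norms bounded independently of $p$ up to the heat-kernel constants, which must be checked to be $\kappa$-uniform. This is delicate because the time scale is now weighted.

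\textbf{Step 2: lower bound at $p=2$.} Multiply $\beta u_t-u_{xx}=g$ by $-u_{xx}$ and integrate; the term $-\int\beta u_tu_{xx}$ is not a pure derivative since $\beta$ varies. So we use instead the original form: multiply $u_t-au_{xx}=f$ by $-u_{xx}$. Then $\int u_t(-u_{xx})=\tfrac12\|u_x(1)\|^2\ge0$, giving $\kappa\|u_{xx}\|_2^2\le\|f\|_2\|u_{xx}\|_2$, so $\|u_{xx}\|_2\le\kappa^{-1}\|f\|_2$. Combined with the upper bound $\|f\|\le\|u_t\|+\kappa^{-1}\|u_{xx}\|$, the raw ratio is $\kappa^2$. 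To obtain $m/M\sim\kappa$ we measure $f$ in $L_p$ with weight $a^{-1/2}$ and $u$ via $\|a^{-1/2}u_t\|+\|a^{1/2}u_{xx}\|$. Then $\|a^{-1/2}f\|\le$ the $X$-norm, and the energy identity gives $\|a^{1/2}u_{xx}\|_2^2\le\|a^{-1/2}f\|_2\|a^{1/2}u_{xx}\|_2$, hence $m\ge 1/2$ at $p=2$. The weights cost factors $\kappa^{\pm1/2}$ in the interpolation-scale constants between weighted and unweighted norms. The main obstacle is bookkeeping these so that Shneiberg's radius comes out $\gtrsim\kappa$ rather than $\kappa^2$. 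We expect this via weighted $L_p$ spaces $L_p(a^{-p/2})$, which interpolate exactly (Stein--Weiss), so that only the heat-retraction constants enter. We then obtain existence and uniqueness, with a priori bounds, for $|p-2|<c\kappa$.

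\textbf{Step 3: divergence form and $Q_1$.} For the divergence problem we use $Tv=v_t-(av_x)_x$ from $\cH^1_p$ to $\{f_x\}$, normed via $\|a^{1/2}v_x\|$ and $\|a^{-1/2}f\|$. At $p=2$ the energy identity gives the lower bound directly, and the same Shneiberg argument applies. On $Q_1$ with zero parabolic boundary data the same argument works, using the corresponding heat-operator isomorphisms on the rectangle.
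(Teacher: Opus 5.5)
Your final setup is the paper's own: the weighted norms $\|u\|_{X^a_p}=\|a^{-1/2}u_t\|_{L_p}+\|a^{1/2}u_{xx}\|_{L_p}$ (the paper uses the $\ell_2$ combination) and $\|f\|_{Y^a_p}=\|a^{-1/2}f\|_{L_p}$, the energy identity at $p=2$, and the quantitative Shneiberg theorem. The gap is the one step that carries the content of the theorem: that the Shneiberg radius is $\gtrsim\kappa$ rather than $\kappa^2$. You leave this as something you ``expect,'' and you never compute the ``heat-retraction constants'' you appeal to. These constants are necessarily $\kappa$-dependent. If $[X^a_{3/2},X^a_{3}]_\theta$ were equivalent to $X^a_p$ with absolute constants, Shneiberg would give a $\kappa$-independent radius. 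That is incompatible with the counterexample at $p_+=2/(1-\kappa)$, where $p_+-2=2\kappa/(1-\kappa)$. So you must determine both the size of this loss and where it enters.

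Here is how it closes; this is what the paper does. Interpolate the weighted endpoint spaces themselves, setting $\widetilde X_\theta=[X^a_{3/2},X^a_3]_\theta$ and $\widetilde Y_\theta=[Y^a_{3/2},Y^a_3]_\theta$.

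(a) Since $a^{-1/2}T_au=a^{-1/2}u_t-a^{1/2}u_{xx}$, you have $\|T_au\|_{Y^a_p}\le\|u\|_{X^a_p}$ for every $p$. So the Shneiberg upper bound $M$ on the endpoint couple is absolute, and the weights never cost anything in $M$.

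(b) The map $f\mapsto a^{-1/2}f$ is one isometry $Y^a_p\to L_p$ for all $p$. Hence $\widetilde Y_\theta=Y^a_p$ isometrically; this is your Stein--Weiss remark.

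(c) At both endpoints you have $\kappa^{1/2}\|u\|_{X^0_p}\le\|u\|_{X^a_p}\le\kappa^{-1/2}\|u\|_{X^0_p}$. Combined with the $\kappa$-free identification $[X^0_{3/2},X^0_3]_\theta=X^0_p$ coming from the heat isomorphism, this gives $N^{-1}\kappa\|u\|_{X^a_p}\le\|u\|_{\widetilde X_\theta}\le N\kappa^{-1}\|u\|_{X^a_p}$. Equivalently, your retraction $(g_1,g_2)\mapsto(\partial_t-\partial_{xx})^{-1}(g_1-g_2)$ from $L_p(a^{-p/2})\times L_p(a^{p/2})$ has norm $\lesssim\kappa^{-1}$.

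Hence at $\theta=1/2$ the lower bound reads $\|u\|_{\widetilde X_{1/2}}\le N\kappa^{-1}\|T_au\|_{\widetilde Y_{1/2}}$. The factor $\kappa^{-1}$ enters exactly once, in the lower-bound constant, and the radius is $\simeq\kappa$.

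There are also smaller points to fix.

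Shneiberg propagates invertibility, so you need surjectivity at $p=2$ and not only the lower bound. This comes from the method of continuity starting from the heat equation.

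At $p=2$ you must also bound $\|a^{-1/2}u_t\|_2$. The cleanest way is the full identity $\|a^{-1/2}T_au\|_2^2=\|a^{-1/2}u_t\|_2^2+\|a^{1/2}u_{xx}\|_2^2+\|u_x(1,\cdot)\|_2^2$.

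Step 1 is superseded by the weighted norms and can be dropped.

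For the divergence problem, if you run the scale directly, the $X$-norm must also control $v_t$, e.g.\ by $\|a^{-1/2}G\|_{L_p}$ with $v_t=G_x$. Otherwise the upper bound on $\|a^{-1/2}f\|$ fails. The paper avoids this by getting existence from $v=u_x$ and uniqueness by duality against the time-reversed problem at $p'$.

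Your direct treatment of $Q_1$ is a legitimate alternative to the paper's extension and partition-of-unity argument, since the lateral boundary terms in the energy identity vanish because $u_t=0$ there.
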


Here, $v \in \cH^1_p(\cQ)$ is weak solution of \eqref{eqn-260916-1128}, if
\begin{equation*}
    \int_{\cQ} v \varphi_t - a v_x \varphi_x \, dx dt = \int_{\cQ} f\varphi_x\,dxdt,
    \quad
    \forall \varphi \in C^\infty_c(\overline \cQ), \,\, \varphi(1,\cdot) = 0.
\end{equation*}

The critical exponents in the counterexample below satisfy
\[
 p_+-2=\frac{2\kappa}{1-\kappa},\qquad 2-p_-=\frac{2\kappa}{1+\kappa}.
\]
Thus Theorem \ref{thm:positive} has the optimal order $\kappa$ as $\kappa\downarrow0$, although we do not identify the optimal endpoints of the solvability interval.

For the counterexample, we fix a nonempty open rectangle $D\subset\subset Q_{1/2} =(-1/4,0)\times(-1/2,1/2)$.
\begin{theorem}  \label{thm:main}
Let $0<\kappa<1$ and $p=p_+=2/(1-\kappa)$.  There exists a Borel function $a:Q\to[\kappa,\kappa^{-1}]$, such that the following hold.

(i) There exists a solution $u \in W^{1,2}_2(Q)$ of
\begin{equation*}
u_t - a u_{xx} = 0,\quad \text{a.e. in}\,\,Q,
\end{equation*}
such that $u,u_x$ are bounded and continuous in $Q$ and 
$$
c \tau^{-p} \leq | \{(t,x) \in D: u_{xx}(t,x) > \tau\}| \leq N \tau^{-p}
$$ 
for all sufficiently large $\tau>0$.
In particular, we have
\begin{equation*}
u_t, u_{xx} \in L_{p,\infty}(D),\quad u_t, u_{xx} \notin L_p(D).
\end{equation*}

(ii) There exist $\{\phi_j\}_j \subset C^\infty_0(Q)$, such that
\begin{equation}
 \|\phi_j\|_{L_\infty}+\|\phi_{j,x}\|_{L_\infty}
 +\|\phi_{j,t}-a\phi_{j,xx}\|_{L_p(Q)}\leq N,         \label{eq:test-upper}
\end{equation}
whereas
\begin{equation}
 \int_{D}|\phi_{j,xx}|^p\,dx\,dt\geq c\log j.        \label{eq:test-lower}
\end{equation}
\end{theorem}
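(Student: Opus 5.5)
We follow the convex integration scheme of Astala--Faraco--Sz\'ekelyhidi, reformulated as a differential inclusion. For a solution $u$ of $u_t-au_{xx}=0$, the pair $(u_t,u_{xx})$ takes values in the cone
\[
K_\kappa=\{(s,r):s=\lambda r,\ \lambda\in[\kappa,\kappa^{-1}]\}.
\]
Conversely, if $u\in W^{1,2}_2$ satisfies $(u_t,u_{xx})\in K_\kappa$ a.e., then the Borel function $a=u_t/u_{xx}$ (set $a=1$ where $u_{xx}=0$) is admissible. So the task is to build $u=\lim u_n$ on $Q$, starting from an affine-in-$t$, quadratic-in-$x$ function $u_0$ on $D$. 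At each stage $n$, on a family of small rectangles $E_n$, the Hessian pair equals a point $A_n=(\lambda r_n,r_n)$ lying off the admissible set. We then split $A_n$ along the two rank-one-type directions available in the parabolic setting:
\begin{itemize}
\item a \emph{spatial} perturbation, a sum of localized functions $\psi(x/\epsi)$ of the $x$ variable only, which moves $u_{xx}$ while keeping $u_t$ fixed;
\item a \emph{temporal} perturbation, a function of $t$ only (cut off in $x$ at a larger scale), which moves $u_t$ while keeping $u_{xx}$ fixed.
\end{itemize}
We choose the splitting so that a fraction of $E_n$ lands exactly on the boundary rays $s=\kappa r$ or $s=\kappa^{-1}r$ and is frozen there. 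The remaining part $E_{n+1}$ has $u_{xx}$ enlarged by a factor $1+\theta_n$ and measure reduced by a factor depending on $\eta_n,\theta_n$. As the introduction indicates, we take the spatial and temporal scales comparable ($\eta_n\simeq\theta_n$).

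We compute the growth rates as in the elliptic case. With $|u_{xx}|\sim \prod_{k<n}(1+\theta_k)=:R_n$ on $E_n$ and $|E_n|\sim R_n^{-p_+}$, the distribution function of $u_{xx}$ satisfies $|\{u_{xx}>\tau\}|\simeq\tau^{-p_+}$. Optimizing the split between the two cone edges yields the exponent $p_+=2/(1-\kappa)$. Concretely, the probability of staying in $E_{n+1}$ is $1-\tfrac{1-\kappa}{2}\theta_n+O(\theta_n^2)$ per factor $(1+\theta_n)$. This gives the weak-$L_{p_+}$ bounds in (i) and the two-sided estimate $c\tau^{-p}\le|\{u_{xx}>\tau\}\cap D|\le N\tau^{-p}$.

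Error control follows from the structure of the perturbations. Each spatial perturbation has amplitude $O(\epsi_n^2 R_n)$ in $u$ and $O(\epsi_n R_n)$ in $u_x$, so choosing $\epsi_n$ summable against $R_n$ gives uniform convergence of $u_n$ and $u_{n,x}$. The limits $u$ and $u_x$ are therefore bounded and continuous. The cutoffs produce small errors in the Hessian pair; we absorb these by placing the target points slightly inside the open cone and shrinking the margins geometrically. The limit lies in $W^{1,2}_2$ because $u_{xx}\in L_{p_+,\infty}\subset L_2(D)$. Outside $\bigcup E_n$, the coefficient $a$ is smooth on the open pieces; the exceptional set $\bigcap_n\overline{E_n}$ together with the cutoff boundaries is compact with measure zero. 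This yields the statement that $a$ is smooth off a null compact set. Finally, $u_{xx}\notin L_p$ follows from the lower distribution bound, and $u_t=au_{xx}$ then gives the same properties for $u_t$.

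For (ii), we stop the iteration at stage $j$ and mollify, obtaining $\phi_j=\chi\,u_j$ with a cutoff $\chi$ equal to $1$ on $D$. In the truncated construction, the remaining sets $E_j$ carry a nonadmissible pair. We repair this by defining $a$ on $E_j$ to be the final coefficient, so that the residual $f_j=\phi_{j,t}-a\phi_{j,xx}$ is supported in $E_j$ and in the cutoff region. Its size is $|f_j|\lesssim R_j$ on $E_j$, so $\|f_j\|_{L_p}^p\lesssim R_j^p|E_j|\lesssim1$. Meanwhile, $\int_D|\phi_{j,xx}|^p\gtrsim\sum_{n<j}R_n^p|E_n\setminus E_{n+1}|\gtrsim j$, or at least $\log j$ if $\theta_n$ decays like $1/n$ (which we need for the splitting errors to be summable). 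The main obstacle is keeping the $\phi_j$ smooth while using the single fixed coefficient $a$ from (i) rather than one adapted to $j$. We would handle this by taking $\phi_j$ to be the stage-$j$ approximant of the same construction and bounding $\phi_{j,t}-a\phi_{j,xx}$ on $E_j$, where $a$ from later stages is still bounded. The delicate point is the precise choice of $\theta_n$ ensuring both summability of errors and the logarithmic divergence.
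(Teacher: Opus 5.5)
\textbf{Verdict: genuine gap.} Your architecture matches the paper's:
\begin{itemize}
\item an $x$-oscillation that splits $u_{xx}$ at fixed $u_t$;
\item then a $t$-oscillation that splits $u_t$ at fixed $u_{xx}$;
\item frozen pieces placed near the two opposite edges of the cone;
\item for (ii), $\phi_j=\chi u_j$ with the fixed limit coefficient, so the residual is $j(1-a)\mathbf 1_{\cE_j}$.
\end{itemize}
However, the step that actually produces $p_+$ is never carried out, and the one formula you state is wrong.

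\textbf{The retention rate is inverted.} A retention rate $1-\frac{1-\kappa}{2}\theta_n$ per growth factor $1+\theta_n$ gives $|E_n|\simeq R_n^{-(1-\kappa)/2}$. That is incompatible even with $u_{xx}\in L_1$. Each of the two splittings must discard a fraction $\frac{\theta_n}{1-\kappa}+O(\theta_n^2)$ (in your notation), so the total loss is $\frac{2}{1-\kappa}\theta_n$. Concretely, the paper takes
\[
A_n=(n,n),\quad B_n=(n,1+\kappa(n-1)),\quad D_n=(n,n+1),\quad C_n=(1+\kappa n,n+1),
\]
and splits $A_n$ between $B_n,D_n$ and then $D_n$ between $C_n,A_{n+1}$. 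Both weights equal $\frac{1}{(1-\kappa)n}+O(n^{-2})$, so $\rho_n=1-\frac pn+O(n^{-2})$.

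\textbf{$A_n$ cannot lie off the admissible set.} The cone is convex. On each component $R$ of $\cE_n$ one has $u_m-u_n\in C_0^\infty(R)$, hence $\int_R(u_t,u_{xx})=|R|A_n$. So $A_n$ lies in the closed cone. The paper puts $A_n$ on the diagonal $a=1$. For this staircase, that is the ray $s=\lambda h$ minimizing the loss $\frac1{1-\kappa\lambda}+\frac{\lambda}{\lambda-\kappa}$. The real difficulty is unbounded growth, not non-admissibility.

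\textbf{The growth law is forced, not a free choice.} Take your splitting with a fixed ratio $1+\theta$, even with the frozen points exactly on the rays. Then
\[
\rho=\frac{1-\kappa-\kappa\theta}{(1-\kappa+\theta)(1+\theta)},
\]
and expanding gives $\log\rho=-p\log(1+\theta)-c_\kappa\theta^3+O(\theta^4)$ with $c_\kappa>0$ for $\kappa>0$. So the exponent $-\log\rho/\log(1+\theta)$ strictly exceeds $p_+$; for example it is $\approx4.02>4$ when $\kappa=\frac12$, $\theta=0.1$. Consequences:
\begin{itemize}
\item $u_{xx}\in L_{p_+}$, so (i) fails.
\item $\int|u_{j,xx}|^p$ stays bounded, so (ii) fails too, and your ``$\gtrsim j$'' alternative never occurs.
\end{itemize}
What you need is $\theta_n\to0$ with $\sum\theta_n=\infty$ and $\sum\theta_n^2<\infty$. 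You also need the relative measure errors in $|\cE_{n+1}|=\rho_n(1+e_n)|\cE_n|$ to satisfy $\sum|e_n|<\infty$. The additive staircase $A_{n+1}=A_n+(1,1)$ realizes all of this and yields:
\begin{itemize}
\item $|\cE_n|\simeq n^{-p}$ and $|\cG_n|\gtrsim n^{-p-1}$;
\item the two-sided distribution bound in (i);
\item $\sum_{n<j}(n+1)^p|\cG_n|\gtrsim\log j$ for (ii).
\end{itemize}

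\textbf{Minor point on $W^{1,2}_2$ membership.} This should be proved for the approximants rather than read off from the limit's distribution function. The paper uses $\int w_tw_{xx}=0$ for $w=u_m-u_n\in C_0^\infty(\cE_n)$ together with $u_{m,t}u_{m,xx}\ge c_\kappa(|u_{m,t}|^2+|u_{m,xx}|^2)$. This gives $\int_{\cE_n}(|u_{m,t}|^2+|u_{m,xx}|^2)\le Nn^2|\cE_n|\to0$.
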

In fact, the coefficient $a$ and the solution $u$ constructed in (i) are smooth outside a compact set $\Sigma \subset D$ of measure zero. 

By considering $v = u_x$ and $v_j = \phi_{j,x}$, we immediately obtain examples for divergence form equations.
\begin{corollary}                 \label{cor:div}
For the coefficient in Theorem \ref{thm:main} and same exponent $p$, there exists a bounded continuous weak solution $v$ of
\begin{equation*}
 v_t-(a v_x)_x=0\quad\text{in }Q,                       
\end{equation*}
with
\begin{equation*}
 v_x\in L_2(Q)\cap L_{p,\infty}(Q),\qquad
 v_x\notin L_p(D).                                 
\end{equation*}
Furthermore, there exist a sequence of smooth solutions $v_j\in C_0^\infty(Q)$ satisfying
\begin{equation*}
 v_{j,t}-(a v_{j,x})_x=(f_j)_x,
 \qquad
 \|f_j\|_{L_p(Q)}+\|v_j\|_{L_p(Q)}\leq N,                 
\end{equation*}
while for all sufficiently large $j$,
\begin{equation*}
 \int_{D}|v_{j,x}|^p\,dx\,dt\geq c\log j.      
\end{equation*}
\end{corollary}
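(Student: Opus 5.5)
We derive Corollary \ref{cor:div} from Theorem \ref{thm:main} by differentiating once in $x$, taking $v=u_x$ for the first part and $v_j=\phi_{j,x}$ for the second. The key observation is that in one space dimension the nondivergence operator can be rewritten in divergence form. If $w\in W^{1,2}_2$ with $w_t=aw_{xx}+g$ a.e., then $v=w_x$ satisfies
\[
v_t-(av_x)_x=g_x
\]
in the weak sense. To see this, test against $\varphi_x$ with $\varphi\in C^\infty_c(Q)$:
\[
\int v\varphi_t=-\int w\varphi_{xt}=\int w_t\varphi_x=\int (aw_{xx}+g)\varphi_x=\int(av_x+g)\varphi_x .
\]
The first equality is integration by parts in $x$. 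The second is integration by parts in $t$, which is legitimate since $w,w_t\in L_2$ and $\varphi$ has compact support. This is exactly the weak formulation $\int v\varphi_t-av_x\varphi_x=\int g\varphi_x$.

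\emph{First part.} Apply this with $w=u$ from Theorem \ref{thm:main}(i) and $g=0$. Then $v=u_x$ is bounded and continuous in $Q$ by part (i), and $v_x=u_{xx}\in L_2(Q)$ because $u\in W^{1,2}_2(Q)$. The distribution bound in part (i) gives $v_x\in L_{p,\infty}(D)$ and $v_x\notin L_p(D)$.

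The corollary also claims $L_{p,\infty}$ on all of $Q$, which needs one more input. I would use the remark after Theorem \ref{thm:main}: $u$ is smooth outside the compact set $\Sigma\subset D$. For a compact $K\subset Q$ with $K\cap\overline D=\emptyset$, $u_{xx}$ is bounded on $K$. Near $\partial Q$ I expect the construction to make $u$ smooth up to the boundary, and I would check this there. If only interior smoothness is available, I would state the $L_{p,\infty}$ bound on $Q$ via $L_2$ away from $D$ together with the weak-$L_p$ bound on $D$, or simply shrink $Q$ slightly. This bookkeeping near $\partial Q$ is the only point I expect to need care.

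\emph{Second part.} Set $v_j=\phi_{j,x}\in C_0^\infty(Q)$ and $f_j=\phi_{j,t}-a\phi_{j,xx}$. The identity above gives
\[
v_{j,t}-(av_{j,x})_x=(f_j)_x
\]
in the weak sense. Estimate \eqref{eq:test-upper} bounds $\|f_j\|_{L_p(Q)}$ directly. It also bounds $\|v_j\|_{L_\infty}$, hence $\|v_j\|_{L_p(Q)}$, since $Q$ has finite measure. Finally, \eqref{eq:test-lower} reads $\int_D|v_{j,x}|^p\ge c\log j$ because $v_{j,x}=\phi_{j,xx}$. Nothing beyond the one-line integration by parts is needed.
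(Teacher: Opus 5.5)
Your argument is the paper's: it takes $v=u_x$ and $v_j=\phi_{j,x}$, and your integration-by-parts check of the weak formulation fills in what the paper leaves implicit.

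The boundary bookkeeping you flag for $v_x\in L_{p,\infty}(Q)$ is immediate from the construction. It starts from $u_{n_0}=n_0(t+x^2/2)$, and every later modification is compactly supported in $\cE_{n_0}=D$, so $u_{xx}\equiv n_0$ on $Q\setminus D$. Do not use your fallback via $L_2$ away from $D$: it would fail, since $L_2\not\subset L_{p,\infty}$ on a set of finite measure when $p>2$.
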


We also show the failure of the estimates at the lower exponent $p_-$  for both non-divergence and divergence form equations.
\begin{theorem}                     \label{cor:dual}
Let $p_-=2/(1+\kappa)$.  
There is a Borel function $\widetilde a$ on $\cQ=(0,1)\times\bR$, satisfying $\kappa\leq\widetilde a\leq\kappa^{-1}$, 
for which the $W^{1,2}_{p_-}$ a priori estimate fails.  More precisely, there is no constant $N$ such that, for every $f\in L_2(\cQ)\cap L_{p_-}(\cQ)$, the $W^{1,2}_2$ solution of
\[
 u_t-\widetilde a u_{xx}=f\quad\text{in }\cQ,
 \qquad u(0,\cdot)=0,
\]
satisfies
\begin{equation}
 \|u_t\|_{L_{p_-}(\cQ)}+\|u_{xx}\|_{L_{p_-}(\cQ)}
 \leq N\|f\|_{L_{p_-}(\cQ)}.                         \label{eq:lower-forward}
\end{equation}

For the same coefficient, there is no constant $N$ such that, for every $f\in L_2(\cQ)\cap L_{p_-}(\cQ)$, the weak solution of
\[
 v_t-(\widetilde a v_x)_x=f_x,\qquad v(0,\cdot)=0,
\]
satisfies
\[
 \|v_x\|_{L_{p_-}(\cQ)}\leq N\|f\|_{L_{p_-}(\cQ)}.
\]
\end{theorem}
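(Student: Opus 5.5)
The plan is to derive Theorem~\ref{cor:dual} from Theorem~\ref{thm:main} and Corollary~\ref{cor:div} by duality. We first place the counterexample coefficient on the half-strip $\cQ$ and reverse time, then prove the divergence form statement, and finally reduce the nondivergence statement to it using one-dimensional antiderivatives.

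\emph{Step 1 (setting up $\widetilde a$).} Let $a$ be the coefficient of Theorem~\ref{thm:main} on $Q=(-1,0)\times(-1,1)$. Define $b(t,x)=a(t-1,x)$ for $(t,x)\in(0,1)\times(-1,1)$ and $b=1$ elsewhere in $\cQ$. Then set $\widetilde a(t,x)=b(1-t,x)$, the time reversal. Clearly $\kappa\le\widetilde a\le\kappa^{-1}$. The functions $\phi_j$ and $v_j=\phi_{j,x}$ of Theorem~\ref{thm:main} and Corollary~\ref{cor:div}, shifted by $t\mapsto t-1$, are smooth and compactly supported in $(0,1)\times(-1,1)$. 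Hence they are solutions on $\cQ$ with coefficient $b$ and zero initial data, with $\|f_j\|_{L_{p_+}}\le N$ and $\int_{D'}|v_{j,x}|^{p_+}\ge c\log j$, where $D'$ is the shifted rectangle.

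\emph{Step 2 (divergence form at $p_-$).} Suppose, for contradiction, that $\|z_x\|_{L_{p_-}(\cQ)}\le N\|h\|_{L_{p_-}(\cQ)}$ holds for the weak solution $z$ of $z_t-(\widetilde a z_x)_x=h_x$, $z(0,\cdot)=0$, for all $h\in L_2\cap L_{p_-}$. Set $\zeta(t,x)=z(1-t,x)$. Then $\zeta$ is the energy solution of the backward problem $-\zeta_t-(b\zeta_x)_x=h_x(1-t,x)$ with $\zeta(1,\cdot)=0$. Take $h$ so that $h(1-t,\cdot)=\psi\in C_c^\infty(D')$. Since $v_j$ is smooth, compactly supported, and vanishes at $t=0$, while $\zeta\in L_2((0,1);W^1_2)$ vanishes at $t=1$, the integrations by parts are legitimate:
\[
\int_\cQ v_j\psi_x=\int_\cQ v_j\bigl(-\zeta_t-(b\zeta_x)_x\bigr)
=\int_\cQ\bigl(v_{j,t}-(bv_{j,x})_x\bigr)\zeta=-\int_\cQ f_j\zeta_x .
\]
Hölder's inequality then gives $\bigl|\int v_{j,x}\psi\bigr|\le\|f_j\|_{L_{p_+}}\|\zeta_x\|_{L_{p_-}}\le N^2\|\psi\|_{L_{p_-}}$. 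Taking the supremum over $\psi$ yields $\|v_{j,x}\|_{L_{p_+}(D')}\le N^2$ uniformly in $j$, which contradicts the $c\log j$ lower bound. This proves the second assertion.

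\emph{Step 3 (nondivergence form at $p_-$).} We reduce to Step 2 by integrating in $x$. Assume the estimate \eqref{eq:lower-forward} holds for $\widetilde a$. Given $h\in C_c^\infty$, let $z$ be the weak solution from Step 2, and set $U(t,x)=\int_{-\infty}^x z(t,y)\,dy$. Formally, $\frac{d}{dt}\int_\bR z\,dx=\int_\bR\bigl((\widetilde a z_x)_x+h_x\bigr)\,dx=0$, so $\int_\bR z\,dx\equiv0$ and $U$ decays at both ends. The equation for $z$ integrates to $U_t-\widetilde aU_{xx}=h+c(t)$, and the decay forces $c\equiv0$. We want to identify $U$ with the $W^{1,2}_2$ solution for the data $h$. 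Then \eqref{eq:lower-forward} gives $\|z_x\|_{L_{p_-}}=\|U_{xx}\|_{L_{p_-}}\le N\|h\|_{L_{p_-}}$, contradicting Step 2.

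The main obstacle is this identification: we need $U\in\mathring W^{1,2}_2(\cQ)$, i.e. $U\in L_2$, which requires enough decay of $z$ in $x$. I would first try to prove Gaussian-type (Aronson) decay of $z$ for compactly supported $h$, which holds for divergence form operators with measurable coefficients. If that turns out to be delicate, the fallback is to localize. Multiply $U$ by a cutoff $\chi(x)$ equal to $1$ on $(-R,R)$. This produces lower-order error terms supported where $|x|>R$; they are small by energy estimates on $z$ together with $\int_\bR z\,dx=0$, and they can be absorbed into the data as $R\to\infty$.
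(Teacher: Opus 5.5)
Your Steps 1 and 2 are sound. Step 2 is the divergence-form duality the paper also uses, written out with test functions. The integration by parts against $\zeta$ is justified: $b\,v_{j,x}+f_j=\phi_{j,t}$ is smooth and compactly supported, so the weak formulation of the equation for $v_j$ holds against any $\zeta\in L_2((0,1);W^1_2)$.

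Step 3, however, is left unfinished, and the obstacle you flag there comes from working in the hard direction. Do not build $U$ from $z$. Instead start from the $W^{1,2}_2$ solution $U$ of $U_t-\widetilde a U_{xx}=h$, $U(0,\cdot)=0$. Then $z:=U_x$ satisfies:
\begin{itemize}
\item $z\in L_2((0,1);W^1_2)$, with $z_t=(U_t)_x\in L_2((0,1);W^{-1}_2)$ and $z(0,\cdot)=0$;
\item differentiating the equation in $x$ gives $z_t-(\widetilde a z_x)_x=h_x$ weakly.
\end{itemize}
By uniqueness of energy solutions, $z$ is the weak solution from Step 2, and $z_x=U_{xx}$. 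Hence \eqref{eq:lower-forward} gives $\|z_x\|_{L_{p_-}(\cQ)}\le N\|h\|_{L_{p_-}(\cQ)}$, contradicting Step 2. No Aronson bounds, no identity $\int_{\bR}z\,dx=0$, and no localization are needed.

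Even in your direction the decay issue disappears if you integrate in $t$ rather than in $x$. Set $U(t,x)=\int_0^t(\widetilde a z_x+h)(s,x)\,ds$. The equation gives $U_x=\int_0^t z_t\,ds=z$, so $U\in\mathring W^{1,2}_2(\cQ)$ directly.

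With this fix, your route genuinely differs from the paper's. The paper dualizes the nondivergence solution operator $f\mapsto u_{xx}$ directly. For the backward problem $-w_t-\widetilde a w_{xx}=g$, $w(1,\cdot)=0$, it uses the one-dimensional identity $\int_{\cQ}u_{xx}g=\int_{\cQ}f\,w_{xx}$. This rests on $\int_{\cQ}(u_{xx}w_t+u_tw_{xx})=-\int_{\cQ}\partial_t(u_xw_x)=0$. The adjoint is therefore $g\mapsto w_{xx}$, which after time reversal is the forward problem with coefficient $a$, and its boundedness on $L_{p_+}$ contradicts Theorem~\ref{thm:main}(ii). The divergence case is then done by the analogous duality with Corollary~\ref{cor:div}.

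You instead perform the duality only in divergence form. There it is nothing more than the weak formulation, and it is not tied to one space dimension. You then transfer the failure to nondivergence form through $z=U_x$. This is where one-dimensionality enters your argument, exactly as in the paper's passage from Theorem~\ref{thm:main} to Corollary~\ref{cor:div}.

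Your route needs a single duality computation, justified directly with the smooth $v_j$, and no density extension of a solution operator. The paper's route treats the two forms in parallel and obtains the nondivergence statement without passing through the divergence one.
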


\begin{remark}
For any prescribed $p>2$, the choice $\kappa=(p-2)/p$ gives $p=p_+$.  For any prescribed $1<p<2$, the choice $\kappa=(2-p)/p$ gives $p=p_-$.  Thus, if the ellipticity constant is not fixed in advance, the general measurable-coefficient theory has a counterexample for every $p\neq2$.
\end{remark}

As an application, we also obtain a quantitative range $p > 2-c\kappa$ for the Alexandroff maximum principle. Such range has the optimal order as $\kappa \downarrow 0$.
\begin{theorem} \label{thm-alex}
 (i)   There is an absolute constant $c>0$ with the following property.  Let $0<\kappa<1$ and let $a$ be any measurable function on $Q_1$ satisfying \eqref{eq:intro-ell}. For $u \in W^{1,2}_p(Q_1)$ with $p > 2-c\kappa$, we have
    \begin{equation} \label{eqn-260918-1113}
        \sup_{\overline Q_1} u_{+} \leq \sup_{\partial_p Q_1} u_{+} + N\| (u_t-au_{xx})_+\|_{L_p(Q_1)}.
    \end{equation}

(ii)    There is a Borel function $\widetilde a$ on $Q_1$, satisfying $\kappa \leq \widetilde a \leq \kappa^{-1}$, for which the following holds: 
    there is no constant $N$ such that, for every $f \in L_2 (Q_1)$, the $W^{1,2}_2$ solution $v$ of
    \[
        v_t-\widetilde a v_{xx}=f\quad\text{in }Q_1,
    \qquad v =0\quad\text{on }\partial_p Q_1,
    \]
    satisfies
    \begin{equation}\label{eqn-260918-1142}
        \|v\|_{L_1(Q_1)} \leq N \|f\|_{L_{p_-}(Q_1)}.
    \end{equation}
    In particular, the Alexandrov maximum principle fails for every $p \leq p_-$.
\end{theorem}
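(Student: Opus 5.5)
For part (i), the plan is to combine the classical Alexandrov estimate at $p=2$ with a perturbation in $p$ supplied by Theorem~\ref{thm:positive}. In one space dimension, $d+1=2$, so the Krylov–Tso estimate already gives \eqref{eqn-260918-1113} for $p=2$ with $N=N(\kappa)$. For $p\ge 2$ the claim then follows from Hölder's inequality on the bounded set $Q_1$. The real content is the range $2-c\kappa<p<2$.

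Fix $u\in W^{1,2}_p(Q_1)$ with such a $p$ and set $f=u_t-au_{xx}\in L_p(Q_1)$. We may take $c$ smaller than the constant in Theorem~\ref{thm:positive} and restrict to $p>3/2$. Then the parabolic Sobolev embedding $W^{1,2}_p\hookrightarrow C(\overline Q_1)$ holds, so both sides of \eqref{eqn-260918-1113} make sense. The steps are as follows.

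\begin{enumerate}
\item Let $w$ be the solution of $w_t-aw_{xx}=f_+$ in $Q_1$ with zero data on $\partial_pQ_1$. This uses the initial-boundary version in the last sentence of Theorem~\ref{thm:positive}, which gives $\|w\|_{W^{1,2}_p(Q_1)}\le N\|f_+\|_{L_p}$.
\item Show $w\ge 0$ and $\sup w\le N\|f_+\|_{L_p}$, the second by the embedding.
\item Put $z=u-w\in W^{1,2}_p$. It satisfies $z_t-az_{xx}=f-f_+\le 0$, and $z=u$ on $\partial_pQ_1$.
\item Apply the maximum principle to $z$ to get $\sup z\le\sup_{\partial_pQ_1}u_+$, hence $u=z+w\le\sup_{\partial_pQ_1}u_++N\|f_+\|_{L_p}$.
\end{enumerate}

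The main obstacle is the weak maximum principle for $W^{1,2}_p$ solutions with $p<2$, needed both for $w\ge0$ and for $z$. To reduce to $p=2$, approximate. Take $g_k\in L_2\cap L_p$ with $g_k\to f_+$ in $L_p$ and $g_k\ge0$, and let $w_k\in W^{1,2}_2$ solve the same problem with data $g_k$. The $p=2$ Alexandrov estimate gives $w_k\ge 0$. By uniqueness in $W^{1,2}_p$ from Theorem~\ref{thm:positive}, $w_k$ coincides with the $W^{1,2}_p$ solution, and the estimate in that theorem gives $w_k\to w$ in $W^{1,2}_p$, hence uniformly. So $w\ge0$.

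For $z$, let $h=z_t-az_{xx}\le0$ and split $z=z_1+z_2$. Here $z_1$ solves the equation with right-hand side $h$ and zero boundary data, and $z_2$ solves the homogeneous equation with the boundary values of $u$. The same approximation argument gives $z_1\le0$. For $z_2$, approximate the boundary data by smooth data, extend it to a smooth function $\psi$, and write $z_2=\psi+\zeta$, where $\zeta$ has zero boundary data and right-hand side $-(\psi_t-a\psi_{xx})\in L_\infty$. Then $\zeta\in W^{1,2}_2$ by uniqueness, so the $p=2$ maximum principle gives $\sup z_2\le\sup_{\partial_p Q_1}u_+$. Passing to the limit is justified because the $p=2$ Alexandrov bound controls uniform convergence.

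For part (ii), take $\widetilde a$ from Theorem~\ref{cor:dual}, restricted or rescaled to $Q_1$ in its initial-boundary version, and argue by duality. Suppose \eqref{eqn-260918-1142} held. Then for $g\in L_\infty$ the adjoint problem $\phi\mapsto -\phi_t-(\widetilde a\phi)_{xx}$ would map $L_\infty$ boundedly into $L_{p_+}$. Equivalently, the solution operator $f\mapsto v$ from $L_{p_-}$ to $L_1$ would be bounded. The plan is to bootstrap this to a bound $\|v_{xx}\|_{L_{p_-}}\le N\|f\|_{L_{p_-}}$, contradicting Theorem~\ref{cor:dual}. I expect this bootstrap to be delicate. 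A simpler route is to use part (ii) of Theorem~\ref{thm:main} after duality. The corresponding adjoint solutions have $L_{p_+}$ norms of second derivatives blowing up, while $\phi_j,\phi_{j,x}$ stay bounded. Pairing with suitable $f$ then contradicts \eqref{eqn-260918-1142} through the identity $\int v\,g=\int f\phi$.

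Finally, the failure of the Alexandrov principle for $p\le p_-$ follows. The Alexandrov bound applied to $\pm v$ with zero boundary data gives $\|v\|_{L_\infty}\le N\|f\|_{L_p}\le N'\|f\|_{L_{p_-}}$ for $p\le p_-$ on a bounded domain, and this implies \eqref{eqn-260918-1142}, which has just been ruled out.
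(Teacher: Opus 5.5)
Your part (i) is correct and follows the route the paper only sketches: solvability from Theorem~\ref{thm:positive}, approximation, the $p=2$ Alexandrov estimate, and the embedding for $p>3/2$. One step needs tightening. For $z_2$, take $\psi_k\in C^\infty(\overline{Q_1})$ with $\psi_k\to z_2$ in $W^{1,2}_p(Q_1)$, not arbitrary smooth extensions of approximate boundary data. With arbitrary extensions, $z_2-(\psi_k+\zeta_k)$ is a homogeneous $W^{1,2}_p$ solution with small boundary values, and bounding it uniformly is exactly the maximum principle you are proving. With the choice above, $\psi_{k,t}-a\psi_{k,xx}\to0$ in $L_p$. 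Hence $\zeta_k\to0$ in $W^{1,2}_p$, and so uniformly.

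Part (ii), however, has a genuine gap. The bound \eqref{eqn-260918-1142} controls only $\|v\|_{L_1}$, so you must pair $v$ against something bounded in $L_\infty$ once all derivatives sit on the dual side. Your identity $\int vg=\int f\phi$ holds for the formal adjoint $-\phi_t-(\widetilde a\phi)_{xx}=g$. The $\phi_j$ of Theorem~\ref{thm:main}(ii) do not solve that equation; they solve the nondivergence one. In one dimension the bridge is this: if $\phi_t+\widetilde a\phi_{xx}=g$, then $\Phi=\phi_{xx}$ solves $-\Phi_t-(\widetilde a\Phi)_{xx}=-g_{xx}$. Equivalently, $\int\phi_{xx}f=-\int v_{xx}g=-\int v\,g_{xx}$. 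So what you need is control of $\|g_{xx}\|_{L_\infty}$. For the (time-reversed) $\phi_j$, the data $f_j$ contain $j(1-a)\mathbf{1}_{\cE_j}$. These are of size $j$, discontinuous near $\Sigma$, and bounded only in $L_{p_+}$, so the pairing gives nothing. That sequence encodes failure of a $W^{1,2}$ estimate. The failure of \eqref{eqn-260918-1142} is a stronger statement, so Theorem~\ref{cor:dual} is also the wrong input. Your suggested bootstrap would have to upgrade an $L_{p_-}\to L_1$ bound to a $W^{1,2}_{p_-}$ bound, and there is no mechanism for that.

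The missing idea is to use the single exact solution of Theorem~\ref{thm:main}(i), reversed in time. Set $\widetilde u(t,\cdot)=u(-1-t,\cdot)$ and $\widetilde a(t,\cdot)=a(-1-t,\cdot)$. Then $\widetilde u_t+\widetilde a\widetilde u_{xx}=0$ and $\widetilde u_{xx}\notin L_{p_+}(\widetilde D)$. Crucially, outside $\widetilde D$ the function $\widetilde u$ is an explicit quadratic polynomial and $\widetilde a=1$. So with a cutoff $\eta\in C_c^\infty(Q_1)$ equal to $1$ on $\widetilde D$, the residual $g=(\eta\widetilde u)_t+\widetilde a(\eta\widetilde u)_{xx}$ is one fixed $C_c^\infty(Q_1)$ function. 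The identity above then gives $|\int(\eta\widetilde u)_{xx}f|\le\|v\|_{L_1}\|g_{xx}\|_{L_\infty}\le N_g\|f\|_{L_{p_-}}$ for all $f\in L_2(Q_1)$. This forces $(\eta\widetilde u)_{xx}\in L_{p_+}$, a contradiction. In your own language, $\Phi=(\eta\widetilde u)_{xx}$ is an adjoint solution with $L_\infty$ data lying outside $L_{p_+}$. With this in place, your closing deduction that the Alexandrov principle fails for $p\le p_-$ is correct.
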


\section{The positive estimates near \texorpdfstring{$2$}{2}}                      \label{sec:positive}

\begin{proof}[Proof of Theorem~\ref{thm:positive}]
For $1<p<\infty$, equip $\mathring W^{1,2}_p(\cQ)$ with the equivalent norm
\begin{equation*}
 \|u\|_{X_p^a}:=
 \left\|\left(\frac{u_t}{\sqrt a},\sqrt a\,u_{xx}\right)\right\|_{L_p(\cQ;\ell_2^2)}
= \left\| \left( \left|\frac{u_t}{\sqrt a}\right|^2  + \left|\sqrt a\,u_{xx}\right|^2\right)^{1/2}\right\|_{L_p(\cQ)},
\end{equation*}
and equip $L_p(\cQ)$ with
\begin{equation*}
 \|f\|_{Y_p^a}:=\left\|\frac{f}{\sqrt a}\right\|_{L_p(\cQ)}. 
\end{equation*}
Let $T_a u=u_t-a u_{xx}$.  Since
\[
 \frac{T_a u}{\sqrt a}=\frac{u_t}{\sqrt a}-\sqrt a\,u_{xx},
\]
we have
\begin{equation}
 \|T_a u\|_{Y_p^a}\leq \sqrt2\,\|u\|_{X_p^a}             \label{eq:T-bound}
\end{equation}
for every $p$.

At $p=2$, using a standard approximation argument, integration by parts on $(0,1)\times\bR$ and the zero initial condition give
\begin{align*}
 \left\|\frac{T_a u}{\sqrt a}\right\|_2^2
 &=\left\|\frac{u_t}{\sqrt a}\right\|_2^2
   +\|\sqrt a\,u_{xx}\|_2^2-2\int_{\cQ} u_tu_{xx} \notag\\
 &=\left\|\frac{u_t}{\sqrt a}\right\|_2^2
   +\|\sqrt a\,u_{xx}\|_2^2+\|u_x(1,\cdot)\|_2^2.     
\end{align*}
Hence
\begin{equation}
\|u\|_{X_2^a}\leq \|T_a u\|_{Y_2^a}.                   \label{eq:L2-lower}
\end{equation}
By the unique solvability of the heat equation and the method of continuity,
\[
 T_a:\mathring W^{1,2}_2(\cQ)\to L_2(\cQ)
\]
is invertible at $p=2$. 

Next we put this estimate into interpolation spaces and keep track of the interpolation constants. Note that 
\[
\partial_t-\partial_{xx}:\mathring W^{1,2}_p(\cQ)\to L_p(\cQ).
\]
is an isomorphism for any $p\in (1,\infty)$.
Let $X_p^0$ denote $\mathring W^{1,2}_p(\cQ)$ with the unweighted norm
\[
 \|u\|_{X_p^0}=\|(u_t,u_{xx})\|_{L_p(\cQ;\ell_2^2)}.
\]
Then,
\begin{equation}
 \kappa^{1/2}\|u\|_{X_p^0}\leq \|u\|_{X_p^a}
 \leq \kappa^{-1/2}\|u\|_{X_p^0}.                            \label{eq:Xcompare}
\end{equation}
Take $p_0=3/2$ and $p_1=3$, and let
\[
 \widetilde X_\theta=[X_{p_0}^a,X_{p_1}^a]_\theta,
 \qquad
 \widetilde Y_\theta=[Y_{p_0}^a,Y_{p_1}^a]_\theta,
 \qquad
 \frac1p=\frac{1-\theta}{p_0}+\frac\theta{p_1}.
\]
Multiplication by $a^{-1/2}$ identifies $Y_p^a$ isometrically with $L_p(\cQ)$, with the same multiplier for every $p$.  Hence
\begin{equation}
 \widetilde Y_\theta=Y_p^a
 \quad\text{isometrically}.                                      \label{eq:Yinterpolate}
\end{equation}
In particular, no loss of $\kappa$ factor occurs on the $Y$ side.  On the $X$ side,  noting $X_p^0 = [X_{p_0}^0, X_{p_1}^0]_\theta$, interpolation of \eqref{eq:Xcompare} at the two endpoints gives
\begin{equation}
    \label{eq3.13}
 \kappa^{1/2}\|u\|_{X_p^0}
 \leq N\|u\|_{\widetilde X_\theta}
 \leq N\kappa^{-1/2}\|u\|_{X_p^0}.
\end{equation}
By \eqref{eq:Xcompare}, we further obtain
\begin{equation}
 N^{-1}\kappa\|u\|_{X_p^a}\leq \|u\|_{\widetilde X_\theta}
 \leq N\kappa^{-1}\|u\|_{X_p^a},                                 \label{eq:Xinterpolate}
\end{equation}
where $N$ is independent of $\kappa$.  
In particular, $p=2$ corresponds to $\theta_*=1/2$, and \eqref{eq:L2-lower} and \eqref{eq:Xinterpolate} give
\begin{equation*}
\|u\|_{\widetilde X_{1/2}}\le  N\kappa^{-1}\|T_a u\|_{\widetilde Y_{1/2}}.          
\end{equation*}
On the endpoint spaces, \eqref{eq:T-bound} gives an operator norm at most $\sqrt2$.  The quantitative Shneiberg theorem \cite[Theorem A.1]{ABESShneiberg} therefore implies that $T_a:\widetilde X_\theta\to\widetilde Y_\theta$ is invertible and
\begin{equation}\label{eqn-260920-1129}
    \|T_a^{-1}\|_{\widetilde Y_\theta\to \widetilde X_\theta}\le N\kappa^{-1}
    \quad \text{whenever}\,\, |\theta-1/2|<c\kappa
\end{equation}
for a small $c>0$ independent of $\kappa$.
Since $1/p=2/3-\theta/3$, after decreasing $c$ this is true for $p$ satisfying \eqref{eq:positive-range}.  Finally, \eqref{eqn-260920-1129}, \eqref{eq:Yinterpolate}, and \eqref{eq:Xinterpolate} give \eqref{eq:positive-est}.

Next, we consider divergence form equations. First, by considering $v = u_x$, we obtain the existence of solutions that satisfy the desired estimate. The uniqueness follows by duality. Indeed, suppose $v \in \cH^1_p(\cQ)$ is a solution of \eqref{eqn-260916-1128} with $f= 0$. For every $g \in C^\infty_c(\cQ)$, decreasing $c$ if needed, we can find $w \in \cH^1_{p'}(\cQ)$, satisfying
\begin{equation*}
    w_t - (a(1-t,\cdot) w_x)_x = g_x\quad \text{in}\,\,\cQ,
    \quad
    w(0,\cdot) = 0.
\end{equation*}
Testing the equation of $v$ by $\tilde w (t,\cdot):= w(1-t,\cdot)$, a standard approximation argument gives
\begin{equation*}
    \int_{\cQ} v_x(t,x) g(1-t,x)\,dxdt = - \int_{\cQ} v_t \tilde w + a v_x \tilde w_x \, dxdt = 0.
\end{equation*}
Since $g \in C^\infty_c(\cQ)$ is arbitrary, we conclude that $v_x\equiv 0$, and thus $v_t\equiv 0$ and $v\equiv 0$. 

By using the odd/even extensions, we obtain the corresponding results in the half space. Then the solvability for initial-boundary value problems follow from a standard partition of unity argument. See, for instance, \cite[\S 9]{Krylov2008}, We omit the details.
\end{proof}

\section{Construction and critical integrability}             \label{sec:construction}
\subsection{The local replacement}                              

We use the coordinates
\[
 (s,h)=(u_t,u_{xx})
\]
in $\bR^2$.  Define the admissible cone as
\begin{equation}
 \cC_\kappa=\{(s,h):h>0,\ \kappa h<s<\kappa^{-1}h\}.                  \label{eq:cone}
\end{equation}
At level $n$ we take four points in the $(s,h)$ space:
\begin{equation}
 \begin{split}
 A_n&=(n,n),\quad
 B_n=\left(n,1+\kappa(n-1)\right),\\
 D_n&=(n,n+1),\quad
 C_n=\left(1+\kappa n,n+1\right).
 \end{split}                                            \label{eq:points}
\end{equation}
Then, we have splittings
\[
A_n=\theta_n B_n+(1-\theta_n)D_n,
\qquad
\theta_n=\frac{1}{(1-\kappa)(n-1)+1},
\]
and
\[
D_n=\eta_n C_n+(1-\eta_n)A_{n+1},
\qquad
\eta_n=\frac{1}{(1-\kappa)n}.
\]

Choose \(n_0>1/(1-\kappa)\). Then, for every \(n\geq n_0\), all these four points lie strictly inside \(\cC_\kappa\), and
\(\theta_n,\eta_n\in(0,1)\).

In the following lemma, we give a local modification of a function $u$ on a rectangle where $u_{xx}=u_t=n$, i.e., $(u_t, u_{xx}) = A_n$.
The resulting function $\widetilde u$  is close to \(u\) in \(C^0\) and \(C_x^1\), while \((\widetilde u_t,\widetilde u_{xx})\) takes the values \(B_n, C_n\), and \(A_{n+1}\) on regions whose relative measures are close to
\begin{equation}
\theta_n,\qquad
\gamma_n=(1-\theta_n)\eta_n\quad
\text{and}\quad
\rho_n=(1-\theta_n)(1-\eta_n),
        \label{eq:fractions}
\end{equation}
respectively.
Note that as $n \rightarrow \infty$, 
\begin{equation}
                \label{eq3.22}
\theta_n, \gamma_n = \frac{1}{1-\kappa}n^{-1} + O(n^{-2})
\quad
\text{and}
\quad
\rho_n = 1 - \frac{2}{1-\kappa} n^{-1} + O(n^{-2}).
\end{equation}

\begin{figure}[h]
\centering
\begin{tikzpicture}[scale=.72,>=Latex]
\draw[->] (0,0) -- (7.2,0) node[right] {$s=u_t$};
\draw[->] (0,0) -- (0,7.2) node[above] {$h=u_{xx}$};
\draw[dashed] (0,0) -- (7,7) node[above right] {$h=s$};
\draw (0,0) -- (7,3.5) node[right] {$h=\kappa s$};
\draw (0,0) -- (3.5,7) node[above] {$h=s/\kappa$};
\coordinate (A) at (4,4);
\coordinate (B) at (4,2.5);
\coordinate (D) at (4,5);
\coordinate (C) at (3,5);
\coordinate (Ap) at (5,5);
\draw[thick] (B)--(D);
\draw[thick] (C)--(Ap);
\fill (A) circle (2.2pt) node[right] {$A_n$};
\fill (B) circle (2.2pt) node[right] {$B_n$};
\fill (D) circle (2.2pt) node[right] {$D_n$};
\fill (C) circle (2.2pt) node[above left] {$C_n$};
\fill (Ap) circle (2.2pt) node[above right] {$A_{n+1}$};
\end{tikzpicture}
\caption{The two convex splittings of points in the $(u_t,u_{xx})$-plane.  }
\label{fig:splitting}
\end{figure}

\begin{lemma}                         \label{lem:local}
Let $R$ be a bounded open rectangle, $n\geq n_0$, and let $u$ be smooth in $\overline R$ with
\begin{equation}
 u_t=u_{xx}=n\quad\text{in }R.                         \label{eq:local-base}
\end{equation}
Given any $\varepsilon,\delta>0$, there is a smooth function $\widetilde u$ such that
\begin{equation}
 \widetilde u-u\in C_0^\infty(R),\qquad
 \|\widetilde u-u\|_{L_\infty}
 +\|\widetilde u_x-u_x\|_{L_\infty}<\delta,           \label{eq:local-C1}
\end{equation}
and
\begin{equation}
 (\widetilde u_t,\widetilde u_{xx})\in\cC_\kappa,
 \quad \widetilde u_{xx}>\frac34,        
 \quad \max\{|\widetilde u_t|,|\widetilde u_{xx}|\}\le n+2
 \label{eq:local-cone}
\end{equation}
in $R$.  Outside a subset of measure at most $\varepsilon |R|$, $(\widetilde u_t,\widetilde u_{xx})$ equals $B_n$, $C_n$, or $A_{n+1}$ on finite unions of rectangles whose relative measures differ from the three numbers in \eqref{eq:fractions} by at most $\varepsilon$.
\end{lemma}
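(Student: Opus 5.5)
The plan is to realize the two convex splittings one after the other: a fast oscillation in $x$ for the first, and a fast oscillation in $t$ for the second. Cutoffs and mollifications produce only small errors, and we keep those errors inside the open cone $\cC_\kappa$. The reason this works is that the points $A_n,B_n,D_n,C_n,A_{n+1}$ lie strictly inside $\cC_\kappa$ and $\cC_\kappa$ is convex. Hence every convex combination of them, plus a perturbation of size less than some $\sigma_n>0$, stays in $\cC_\kappa$. The same perturbation keeps $h>3/4$, since all the $h$-values are at least $1+\kappa(n-1)\ge 1$, and keeps $\max\{|s|,|h|\}\le n+2$.

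\emph{Step 1 (spatial splitting $A_n\to B_n,D_n$).}
\begin{itemize}
\item Fix $\ell>0$ small. Let $w$ be an $\ell$-periodic function of $x$ alone. On each period, $w''$ equals $h_{B}-n=1-(1-\kappa)(n-1)-1+\ldots$, namely $1+\kappa(n-1)-n$, on an interval of length $\theta_n\ell$, and equals $+1$ on the complementary interval.
\item The identity $A_n=\theta_nB_n+(1-\theta_n)D_n$ says exactly that $w''$ has zero mean over a period. So $w'$ is periodic, and we choose $w$ periodic with $\|w\|_\infty+\|w'\|_\infty\le N n\ell$.
\item We smooth $w''$ by mollifying at scale $\ell^2$. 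Near the jumps, $w''$ then takes values between the two endpoints. These points are convex combinations of $B_n$ and $D_n$, so they lie in $\cC_\kappa$.
\item Let $\chi\in C_0^\infty(R)$ with $0\le\chi\le1$ and $\chi=1$ on a subrectangle $R'$ with $|R\setminus R'|<\varepsilon|R|/3$. Set $u^1=u+\chi w$.
\item The error terms are $\chi_t w$ in $u_t$, and $2\chi_x w'+\chi_{xx}w$ in $u_{xx}$. Both are $O(n\ell\,\|\chi\|_{C^2})$. This is below $\sigma_n/2$ once $\ell$ is small, with $\chi$ fixed first.
\item On $R\setminus R'$ the value $\chi$ lies in $[0,1]$. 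There $(u^1_t,u^1_{xx})$ is a convex combination of $A_n$ and the $B_n/D_n$ mixtures, up to the small error, so it stays in the cone.
\end{itemize}

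\emph{Step 2 (temporal splitting $D_n\to C_n,A_{n+1}$).}
\begin{itemize}
\item Inside $R'$, the set where $u^1$ has $(s,h)=D_n$ exactly is a finite union of rectangles $S_i=I_i\times J$. Each has width $(1-\theta_n)\ell$, minus the mollification layers.
\item On each $S_i$ take a cutoff $\psi_i(x)\zeta(t)$ equal to $1$ except on a boundary layer of relative measure less than $\varepsilon/3$.
\item Let $g(t)$ be $\tau$-periodic. Its derivative $g'$ equals $1+\kappa n-n$ on a fraction $\eta_n$ of each period and equals $1$ on the rest, mollified at scale $\tau^2$. The mean of $g'$ is zero by the identity $D_n=\eta_nC_n+(1-\eta_n)A_{n+1}$, so $\|g\|_\infty\le Nn\tau$.
\item Set $\widetilde u=u^1+\sum_i\psi_i\zeta\, g$. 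Then $\widetilde u_t$ picks up $\psi_i\zeta g'+\psi_i\zeta' g$, and $\widetilde u_{xx}$ picks up $\psi_i''\zeta g$.
\item The error terms are $O(n\tau\,\ell^{-2})$ and $O(n\tau)$, because $\psi_i''\sim \ell^{-2}$. We therefore choose $\tau\ll\sigma_n\ell^2/n$. This is where the scale hierarchy $\tau\ll\ell^2$ enters.
\item In the cutoff layers, the values are convex combinations of $D_n$ and the $C_n/A_{n+1}$ mixtures, plus a small error, so they are again in the cone.
\item The $C^0$ and $C^1_x$ changes are $O(n\ell+n\tau/\ell)$. Note that $g$ does not depend on $x$, so only the cutoff contributes to $x$-derivatives. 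This is less than $\delta$ for small $\ell$ and $\tau$. Also $\widetilde u-u\in C_0^\infty(R)$.
\end{itemize}

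\emph{Step 3 (bookkeeping).}
\begin{itemize}
\item Outside the mollification layers and cutoff layers, whose total measure we make less than $\varepsilon|R|$ by taking $\ell,\tau$ small, the pair $(\widetilde u_t,\widetilde u_{xx})$ is exactly $B_n$, $C_n$, or $A_{n+1}$.
\item Each of these regions is a finite union of rectangles. Their relative measures are $\theta_n$, $(1-\theta_n)\eta_n$, and $(1-\theta_n)(1-\eta_n)$, up to $O(\varepsilon)$ errors from the layers and from incomplete periods at the boundary.
\end{itemize}

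The main obstacle I expect is Step 2: the $x$-cutoff lives at the small scale $\ell$ of the strips. This forces the temporal period $\tau$ to be much smaller than $\ell^2$, so that $\psi_i'' g$ stays below the cone margin $\sigma_n$. I would handle this by fixing the parameters in the order $\chi$, then $\ell$, then $\tau$, then the mollification scales.
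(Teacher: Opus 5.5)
Your proposal is correct and follows the paper's proof essentially step for step. The paper also first adds a spatial oscillation $\ell^2\chi\,b(x/\ell)$, where $b''$ is a smoothed zero-mean step function realizing $A_n=\theta_nB_n+(1-\theta_n)D_n$; then, on each $D_n$-rectangle, it adds a temporal oscillation $\sigma\chi_i\,\widetilde b(t/\sigma)$ realizing $D_n=\eta_nC_n+(1-\eta_n)A_{n+1}$, fixing the cutoffs first and then taking the periods small so that all errors stay inside $\cC_\kappa$. Your explicit hierarchy $\tau\ll\ell^2$ (more precisely $\tau\ll\varepsilon^2\ell^2$, since $\psi_i$ transitions over a layer of width about $\varepsilon\ell$) is what the paper leaves implicit by choosing $\sigma$ after $\ell$.
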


\begin{proof}
The identities \eqref{eq:local-base} imply
\[
 u(t,x)=n\left(t+\frac{x^2}{2}\right)+c_1 x+c_2
\]
in $R$ for some constants $c_1,c_2$.  Their values play no role below.

We first change the second-order spatial derivative.  
Extend periodically the step function 
$$
f (x) =
\begin{cases}
-(1-\kappa)(n-1), & 0< x<\theta_n,\\
1, & \theta_n<x<1.
\end{cases}
$$
By modifying \(f\) only on transition intervals of total length at most \(\varepsilon/M\) in each period, with a sufficiently large constant \(M\) to be specified later, we obtain a smooth \(1\)-periodic function \(q\) with zero mean and $q(x)\in [-(1-\kappa)(n-1),1]$. 

Choose a periodic function $b$ with $b''=q$, a cutoff $\chi\in C_0^\infty(R)$ which takes value in $[0,1]$ and equals one on a smaller inner rectangle $\tilde R$ that is $\epsi/M$ distance close to $R$, and set
\begin{equation*}
 u^{(1)}=u+\ell^2\chi(t,x)b(x/\ell).              
\end{equation*}
As $\ell\downarrow0$,
\begin{equation}
 u^{(1)}_t=n+O(\ell^2\epsi^{-1}M),\quad
 u^{(1)}_{xx}=n+\chi q(x/\ell)+O(\ell^2\epsi^{-2}M^2 + \ell \epsi^{-1}M),                  \label{eq:spatialder}
\end{equation}
while $u^{(1)}-u =O(\ell^2)$ and $u^{(1)}_x-u_x =O(\ell^2\epsi^{-1}M + \ell)$.  
Moreover, the relative portions of $(u^{(1)}_t, u^{(1)}_{xx}) = B_n$ and $D_n$ in $R$ are $\theta_n + O(\ell + \epsi/M)$ and $1-\theta_n + O(\ell + \epsi/M)$, respectively. Modulo a small set of relative measure $O(\ell+\epsi/M)$, the two level sets are given by finite disjoint unions of sub-rectangles.

On every sub-rectangle where the point is exactly $D_n$, we next change the time derivative. Similarly, extend periodically the step function 
$$
\tilde f (t) =
\begin{cases}
-((1-\kappa)n-1), & 0< t<\eta_n,\\
1, & \eta_n<t<1.
\end{cases}
$$
By modifying \(\tilde f\) only on transition intervals of total length at most \(\varepsilon/M\) in each period, we obtain a smooth \(1\)-periodic function \(\tilde q\) with zero mean. 

Choose a periodic function $\tilde b$ with $\tilde b' = \tilde q$. Now, on each sub-rectangle $R_i$, where $(u^{(1)}_t, u^{(1)}_{xx}) =D_n$, let $\chi_i \in C^\infty_0(R_i)$ be a cutoff function that equals one on a slightly smaller rectangle as before. In $R_i$, set
\begin{equation*}
\tilde u = u^{(1)} +\sigma \chi_i (t,x) \tilde b(t/\sigma).
\end{equation*}
As $\sigma \downarrow 0$,
\begin{equation}
 \widetilde u_t=n+ \chi_i \tilde q(t/\sigma)+O(\sigma \epsi^{-1}M),\qquad
 \widetilde u_{xx}=n+1+O(\sigma \epsi^{-2}M^2),                          \label{eq:temporalder}
\end{equation}
while $\tilde u - u^{(1)} = O(\sigma)$ and $\tilde u_x - u^{(1)}_x = O(\sigma \epsi^{-1}M)$. Moreover, the relative portions of $(\widetilde u_t, \widetilde u_{xx}) = C_n$ and $A_{n+1}$ in $R_i$ (and therefore, in the whole $\{(u^{(1)}_t, u^{(1)}_{xx}) = D_n\}$) are $\eta_n + O(\sigma + \epsi/M)$ and $1-\eta_n + O(\sigma + \epsi/M)$, respectively.

All the points in \eqref{eq:points} and line segments remain a positive distance from the boundary of $\cC_\kappa$ defined in \eqref{eq:cone} when $n\geq n_0$. Therefore, \eqref{eq:local-cone} holds if the errors in \eqref{eq:spatialder} and \eqref{eq:temporalder} are small enough. For this and also \eqref{eq:local-C1}, it suffices to choose $M$ in cutoffs large first and then the periods $\ell, \sigma$ sufficiently small.
\end{proof}

\subsection{Proof of Theorem~\ref{thm:main} (i)}

Let $n_0 > 1/(1-\kappa)$. We start from
\[
 u_{n_0}(t,x):=n_0\left(t+\frac{x^2}{2}\right),\qquad \cE_{n_0}:=D.
\]

Inductively, on each step we work on $\cE_n$, a finite union of rectangles on which
\begin{equation*}
 u_{n,t}=u_{n,xx}=n.
\end{equation*}
Apply Lemma \ref{lem:local} on every component of $\cE_n$.  The union of the rectangles on which the point is $A_{n+1}$ is denoted by $\cE_{n+1}$. From the construction, it is clear that $\overline \cE_{n+1}\subset \cE_{n}$.  The union of the $C_n$-rectangles is denoted by $\cG_n$. After this step, the value of the solution outside $\cE_{n+1}$ is never modified again.

Choose the errors in Lemma \ref{lem:local} small so that
\begin{equation}
 \|u_{n+1}-u_n\|_{L_\infty}
 +\|u_{n+1,x}-u_{n,x}\|_{L_\infty}\leq 2^{-n}.        \label{eq:summable}
\end{equation}
We may also arrange
\begin{equation}
 |\cE_{n+1}|=\rho_n(1+e_n)|\cE_n|,
 \qquad \sum_n|e_n|<\infty,                            \label{eq:measure-rec}
\end{equation}
and
\begin{equation}
 |\cG_n|\geq c\gamma_n|\cE_n|.                             \label{eq:G-lower}
\end{equation}
From \eqref{eq3.22},
\begin{equation}
 \rho_n
 =1-\frac{p}{n}+O(n^{-2}),
 \qquad p=\frac{2}{1-\kappa}.                              \label{eq:rho-exp}
\end{equation}
Using \eqref{eq:rho-exp} and taking logarithms in \eqref{eq:measure-rec},
\begin{equation}
 |\cE_n|\simeq n^{-p}.                                   \label{eq:E-size}
\end{equation}
Moreover, $\gamma_n\simeq n^{-1}$, and hence by \eqref{eq:G-lower},
\begin{equation}
 |\cG_n|\ge c n^{-p-1}.                                 \label{eq:G-size}
\end{equation}

We first prove the convergence of $u_n$.
Let $m>n$ and put
\[
 w=u_m-u_n.
\]
By construction,
\[
 w\in C_0^\infty(\cE_n),\qquad
 u_{n,t}=u_{n,xx}=n\quad\text{on }\cE_n.
\]
For any compactly supported smooth function,
\begin{equation*}
 \int w_t w_{xx}
 =-\int w_{tx}w_x
 =-\frac12\int\partial_t(w_x^2)=0.            
\end{equation*}
Since also $\int w_t=\int w_{xx}=0$, expanding the product gives
\begin{equation}
 \int_{\cE_n}u_{m,t}u_{m,xx}
 = \int_{\cE_n}(w_{t}+n)(w_{xx}+n)
 =n^2|\cE_n|.                  \label{eq:productid}
\end{equation}
By the local construction, we have
\begin{equation}\label{eqn-260912-1109}
u_{m,xx}\ge 3/4,\quad
\kappa \leq a_m \leq \kappa^{-1}\,\, \text{on}\,\,Q,
\quad
\text{where}\,\,a_m:= u_{m,t}/u_{m,xx}.
\end{equation}
 Hence \eqref{eq:productid} implies
\begin{equation}
 \int_{\cE_n}(|u_{m,t}|^2+|u_{m,xx}|^2)
 \leq N_\kappa n^2|\cE_n|.                                   \label{eq:tailL2}
\end{equation}
Since $u_m=u_n$ outside $\cE_n$, \eqref{eq:E-size} and \eqref{eq:tailL2} yield
\begin{equation*}
 \|u_{m,t}-u_{n,t}\|_{L_2}^2
 +\|u_{m,xx}-u_{n,xx}\|_{L_2}^2
 \leq N n^{2-p}\to0.             
\end{equation*}
Together with \eqref{eq:summable}, this gives
\begin{equation}
 u_n\to u\quad\text{strongly in }W^{1,2}_2(Q),
 \qquad
 u_n\to u,\quad u_{n,x}\to u_x\quad\text{uniformly}.\label{eq:strongconv}
\end{equation}

\medskip

Next, we show that the limit $u$ satisfies a desired equation. Let
\begin{equation*}
 \Sigma=\bigcap_{n\geq n_0}\overline{\cE_n}.           
\end{equation*}
By \eqref{eq:E-size}, $\Sigma$ is compact and $|\Sigma|=0$.  The rectangles may be chosen with compact containment at every stage, so $\Sigma\Subset D$.  Every point of $Q\setminus\Sigma$ has a neighborhood on which the sequence $u_n$ is eventually constant.  Thus $u$ is smooth there.  Define
\begin{equation}
 a(t,x)=
 \begin{cases}
 u_t/u_{xx},&(t,x)\in Q\setminus\Sigma,\\
 1,&(t,x)\in\Sigma.
 \end{cases}                                           \label{eq:limit-a}
\end{equation}
In view of \eqref{eq:strongconv}, the inequalities in \eqref{eqn-260912-1109} pass to the limit almost everywhere, so
\begin{equation} \label{eqn-260914-0706}
u_{xx}\ge 3/4,\quad
\kappa \leq a \leq \kappa^{-1}
\quad
\text{and}
\quad u_t - a u_{xx} = 0,\,\,\text{a.e.  in}\,\,Q.
\end{equation}

Finally, we show the desired growth of level sets. 
Outside $\cE_j$ the construction has stopped by level $j$, and therefore
\[
 |u_t|+|u_{xx}|\leq N j
 \quad\text{a.e. on }Q\setminus \cE_j.
\]
It follows from \eqref{eqn-260914-0706} and \eqref{eq:E-size} that
\begin{equation*}
 |\{|u_{xx}|>N j\}|
 \leq N j^{-p}.                                 
\end{equation*}
Conversely, for $n\geq j$, $u_{xx}=n+1$ on $\cG_n$, and hence by \eqref{eq:G-size},
\begin{equation*}
 |\{u_{xx}>j\}|
 \geq\sum_{n\geq j}|\cG_n|
 \geq cj^{-p}.                            
\end{equation*}

This finishes the construction in Theorem \ref{thm:main} (i).

\subsection{Proof of Theorem \ref{thm:main} (ii)}              \label{sec:estfailure}

Keep the coefficient \eqref{eq:limit-a} fixed.  Outside $\cE_j$, the function $u_j$ has already stabilized to $u$.  On $\cE_j$,
\[
 u_{j,t}=u_{j,xx}=j.
\]
Thus
\begin{equation*}
 h_j:=u_{j,t}-a u_{j,xx}
 =j(1-a)\mathbf 1_{\cE_j}.                   
\end{equation*}
By \eqref{eq:E-size},
\begin{equation}
 \|h_j\|_{L_p(Q)}^p\leq N j^p|\cE_j|\leq N.             \label{eq:hjbound}
\end{equation}
On the other hand,
\begin{equation}
 \int_D|u_{j,xx}|^p
 \geq c\sum_{n=n_0}^{j-1}(n+1)^p|\cG_n|
 \geq c\log j-N.                                      \label{eq:ujlog}
\end{equation}
Choose $\chi\in C_0^\infty(Q)$ equal to one in a neighborhood of $D$ and set
$\phi_j=\chi u_j$.
Outside $D$, all $u_j$ equal the same quadratic polynomial and $a=1$.  Hence the derivatives of $\chi$ create one fixed smooth error, independent of $j$.  If
\begin{equation*}
 f_j:=\phi_{j,t}-a\phi_{j,xx},                      
\end{equation*}
then \eqref{eq:hjbound} and \eqref{eq:ujlog} prove \eqref{eq:test-upper}--\eqref{eq:test-lower}.

\section{The lower exponent by duality}                  \label{sec:duality}

In this section, we prove Theorem \ref{cor:dual}.

Let
\[
 p_+=\frac{2}{1-\kappa},\quad p_-=\frac{2}{1+\kappa}.
\]
Translate the coefficient constructed above into $\cQ=(0,1)\times\bR$ and extend it by $1$ outside the translated copy of $Q$.  We keep the notation $a$ for the extended coefficient.  By Theorem \ref{thm:main}, the estimate
\begin{equation}
 \|u_{xx}\|_{L_{p_+}(\cQ)}\leq N\|u_t-a u_{xx}\|_{L_{p_+}(\cQ)}       \label{eq:q-failure}
\end{equation}
fails for compactly supported smooth functions.

Set
\[
 \widetilde a(t,x)=a(1-t,x).
\]
We prove that the corresponding estimate at exponent $p_-$ also fails.  At exponent $2$, for $f\in L_2(\cQ)$ let $u$ be the unique energy solution of
\begin{equation}
 u_t-\widetilde a u_{xx}=f,\qquad u(0,\cdot)=0,                 \label{eq:forward-tilde}
\end{equation}
and define
\[
 S_{\widetilde a}f=u_{xx}.
\]
The $W^{1,2}_2$ estimate shows that $S_{\widetilde a}$ is bounded on $L_2(\cQ)$.

Suppose, to the contrary, that the $W^{1,2}_{p_-}$ a priori estimate \eqref{eq:lower-forward} holds for the $W^{1,2}_2$ solutions of \eqref{eq:forward-tilde}.  Then, for $f\in L_2(\cQ)\cap L_{p_-}(\cQ)$,
\begin{equation}
 \|S_{\widetilde a}f\|_{L_{p_-}(\cQ)}\leq N\|f\|_{L_{p_-}(\cQ)}.       \label{eq:r-estimate}
\end{equation}
By density, $S_{\widetilde a}$ extends to a bounded operator on $L_{p_-}(\cQ)$.  Hence its adjoint operator $S_{\widetilde a}^*$ is bounded on $L_{p_+}(\cQ)$.

For $g\in L_2(\cQ)$, let $w$ be the $W^{1,2}_2$ solution of
\begin{equation}
 -w_t-\widetilde a w_{xx}=g,\qquad w(1,\cdot)=0.                \label{eq:backward-tilde}
\end{equation}
By a standard approximation argument, integration by parts gives
\begin{equation*}
 \int_{\cQ}u_{xx}g=\int_{\cQ}f w_{xx}.                   
\end{equation*}
Here it is crucial that the space dimension is $1$.
Consequently,
\[
 S_{\widetilde a}^*g=w_{xx}.
\]
After the change of variables $t\mapsto1-t$, equation \eqref{eq:backward-tilde} becomes the forward equation with coefficient $a$.  Thus the boundedness of $S_{\widetilde a}^*$ on $L_{p_+}$ gives, after this change of variables, the estimate \eqref{eq:q-failure}, a contradiction.  Thus \eqref{eq:r-estimate}, and hence the $W^{1,2}_{p_-}$ estimate, cannot hold.  

The statement for divergence form equations follows in exactly the same way by using Corollary \ref{cor:div}.
Theorem \ref{cor:dual} is proved.

\section{Alexandrov maximum principle}\label{sec-alex}

\begin{proof}[Proof of Theorem \ref{thm-alex}]
We start with the positive result in (i). This follows from the $W^{1,2}_p(Q_1)$ solvability of initial-boundary value problems in Theorem \ref{thm:positive}, approximation,
the standard Alexandrov maximum principle for $W^{1,2}_2$ strong solutions,  and the parabolic Sobolev embedding $W^{1,2}_p \hookrightarrow C^0$ for $p>3/2$. The procedure is standard and we omit the details.

Next, we prove the negative result in (ii).
For $u$, $a$, and $D$ defined in the Theorem \ref{thm:main}, let $\widetilde u(t,\cdot) = u(-1-t, \cdot)$, $\widetilde a(t,\cdot) = a(-1-t,\cdot)$ and $\widetilde D:= \{(t,x): (-1-t,x) \in D\}$. Then we have
\begin{equation*}
    \widetilde u_t + \widetilde a \widetilde u_{xx} = 0\quad \text{in}\,\,Q_1,
    \quad
    \widetilde u_{xx} \notin L_{p_+}(\widetilde D),
\end{equation*}
and
\begin{equation}\label{eqn-260918-1145-1}
    \widetilde u = n_0 (-1-t + \frac{x^2}{2}) + cx + d, \quad \widetilde a = - \widetilde u_t/\widetilde u_{xx} = 1 \quad \text{in}\,\,Q_1\setminus \widetilde D.
\end{equation}

Let $\eta \in C^\infty_c(Q_1)$ be a usual cutoff function with $\eta = 1$ on $\widetilde D$. Thus
\begin{equation}\label{eqn-260918-1145-2}
   \operatorname{supp} \eta_t,\, \operatorname{supp} \eta_x \subset Q_1\setminus \widetilde D.
\end{equation}
Note that $\eta \widetilde u$ satisfies
\begin{equation*}
    (\eta \widetilde u)_t + \widetilde a (\eta \widetilde u)_{xx} = \eta_t \widetilde u + \widetilde a (2\eta_x \widetilde u_x + \eta_{xx} \widetilde u) =: g \quad \text{in}\,\, Q_1.
\end{equation*}
By \eqref{eqn-260918-1145-1} and \eqref{eqn-260918-1145-2}, $g \in C^\infty_c(Q_1)$.

Now, suppose the contrary that \eqref{eqn-260918-1142} holds for some $p \in (1,p_-]$ and every $f \in L_2(Q_1)$. 
As in the proof of Theorem \ref{cor:dual}, a duality argument applied to $v$ and $\eta \widetilde u$ gives
\begin{equation*}
    \int_{Q_1} (\eta \widetilde u)_{xx} f \,dx dt= -\int_{Q_1} v_{xx} g\,dx dt = -\int_{Q_1} v g_{xx}\,dx dt.
\end{equation*}

By \eqref{eqn-260918-1142}, 
\begin{equation*}
    \left|\int_{Q_1} (\eta \widetilde u)_{xx} f \,dx dt\right| = \left|\int_{Q_1} v g_{xx}\,dx dt\right| \leq \|v\|_{L_1(Q_1)} \|g_{xx}\|_{L_\infty(Q_1)}
    \leq
    N_g \|f\|_{L_{p_-}(Q_1)}.
\end{equation*}
Since $f \in L_2(Q_1)$ is arbitrary, this implies $(\eta \widetilde u)_{xx} \in L_{p_+}(Q_1)$, and therefore, $\widetilde u_{xx} \in L_{p_+}(D)$. 
This contradicts $\widetilde u_{xx} \notin L_{p_+}(D)$, and therefore, (ii) is proved.
\end{proof}

\section*{acknowledgement}

The first author would like to thank Nicolai V. Krylov for bringing this problem to his attention a few years ago and helpful comments on an earlier version of the manuscript.
The authors acknowledge the assistance of generative AI tools in the development of the counterexample in Theorem \ref{thm:main}, particularly in drawing our attention to its connection with \cite{AFS2008}. The authors take full responsibility for the content of the manuscript.

\def\cprime{$'$}

\end{document}